\numberwithin{equation}{section}
\newtheorem{theorem}{Theorem}[section]
\newtheorem{lemma}[theorem]{Lemma}
\newtheorem{proposition}[theorem]{Proposition}
\newtheorem{corollary}[theorem]{Corollary}
\theoremstyle{definition}
\newtheorem{definition}[theorem]{Definition}
\newtheorem{example}[theorem]{Example}
\newcommand\Supp{\operatorname{Supp}}
\newcommand\Hom{\operatorname{Hom}}
\newcommand\Ext{\operatorname{Ext}}
\newcommand\Rad{\operatorname{Rad}}
\newcommand\Ker{\operatorname{Ker}}
\newcommand\im{\operatorname{Im}}
\newcommand\cd{\operatorname{cd}}
\newcommand\height{\operatorname{height}}
\newcommand\grade{\operatorname{grade}}
\newcommand\Spec{\operatorname{Spec}}
\newcommand{\gam}{\Gamma_{I}}
\newcommand{\Rgam}{{\rm R} \Gamma_{\mathfrak m}}
\newcommand{\qism}{\stackrel{\sim}{\longrightarrow}}
\begin{document}
\author[P. Schenzel]{Peter Schenzel}
\title[Dimensions of local cohomology]{On endomorphism rings and dimensions of local cohomology
modules}

%\date{\today}   % Standard: Datum der Kompilierung ("\today").

\address{Martin-Luther-Universit\"at Halle-Wittenberg,
Institut f\"ur Informatik, D --- 06 099 Halle (Saale),
Germany}

\email{peter.schenzel@informatik.uni-halle.de}

\subjclass[2000]{Primary:  13D45; Secondary: 13H10,  14M10}

\keywords{Local cohomology, vanishing,
cohomological dimension}

\begin{abstract}
Let $(R,\mathfrak m)$ denote an $n$-dimensional complete local Gorenstein ring.
For an ideal $I$ of $R$ let $H^i_I(R), i \in \mathbb Z,$ denote the local
cohomology modules of $R$ with respect to $I.$ If $H^i_I(R) = 0$ for all
$i \not= c = \height I,$ then the endomorphism ring of $H^c_I(R)$ is isomorphic to
$R$ (cf. \cite{HSt} and \cite{HS}). Here we prove that this is true if and only
if $H^i_I(R) = 0, i = n, n -1$ provided $c \geq 2$ and $R/I$ has an isolated
singularity resp. if $I$ is set-theoretically a complete intersection in codimension
at most one. Moreover, there is a vanishing result of $H^i_I(R)$  for all
$i > m, m$ a given integer, resp. an estimate of the dimension of $H^i_I(R).$
\end{abstract}

\maketitle

\section{Main Results}

Let $(R, \mathfrak m)$ denote a local Noetherian ring with $n = \dim R.$ For the ideal
$I \subset R$ let $H^i_I(\cdot), i \in \mathbb Z,$ denote the local cohomology
functor with respect to $I,$ see \cite{aG} for its definition and basic results. It
is a difficult question to describe  $\sup \{ i \in \mathbb Z | H^i_I(R) \not= 0\},$
the cohomological dimension $\cd I$ of $I$ with respect to $R.$ Recall that $\height I
\leq \cd I.$ Recently there are some interesting results for ideals with $ c = \height I =
\cd I,$ so called cohomological complete intersections. If $(R, \mathfrak m)$ is a
complete local ring Hellus and St\"uckrad (cf. \cite{HSt}) have shown that the endomorphism
ring $\Hom_R(H^c_I(R), H^c_I(R))$ is isomorphic to $R.$ See also \cite[Lemma 2.8]{HS} for a
more functorial proof and a slight extension in the case of $(R,\mathfrak m)$ a Gorenstein ring.

The first aim of the consideration here is a characterization when the endomorphism ring
of $H^c_I(R)$ is isomorphic to $R.$ To this end we call $I$ locally a cohomological
complete intersection provided $\cd IR_{\mathfrak p} = \height I$ for all
prime ideals $\mathfrak p \in V(I)\setminus \{\mathfrak m\}.$ For instance, if $I$ has an
isolated singularity it is locally a cohomological complete intersection.

\begin{theorem} \label{1.1} Let $(R, \mathfrak m)$ denote a complete local Gorenstein
ring with $n = \dim R.$ Let $I$ be an ideal of $\height I = c.$ Suppose
that $I$ is locally a cohomological complete intersection. Then the following conditions are
equivalent:
\begin{itemize}
\item[(i)] The natural homomorphism $R \to \Hom_R(H^c_I(R), H^c_I(R))$ is an isomorphism.
\item[(ii)]  $H^i_I(R) = 0$ for $i = n-1, n.$
\end{itemize}
\end{theorem}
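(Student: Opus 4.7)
Write $K^i := H^i_I(R)$ and $E := E(R/\mathfrak m)$. My plan is to combine the composition isomorphism $\Rgam \simeq \Rgam \circ \mathbf{R}\Gamma_I$ with local and Matlis duality on the Gorenstein ring $R$. The first observation extracts from the locally cohomological complete intersection hypothesis that for every $\mathfrak p \in V(I)\setminus\{\mathfrak m\}$ and $i \ne c$, $(K^i)_{\mathfrak p} \cong H^i_{IR_{\mathfrak p}}(R_{\mathfrak p}) = 0$, so $\Supp K^i \subseteq \{\mathfrak m\}$; in particular $K^i$ is $\mathfrak m$-torsion whenever $i > c$.

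I would then apply $\Rgam$ to the derived-category truncation triangle $K^c[-c] \to \mathbf{R}\Gamma_I(R) \to \tau_{>c}\mathbf{R}\Gamma_I(R) \to K^c[-c+1]$. Since $R$ is Gorenstein, the middle term transforms into $\Rgam(R) \simeq E[-n]$; since $\tau_{>c}\mathbf{R}\Gamma_I(R)$ has $\mathfrak m$-torsion cohomology by the first step, it is fixed by $\Rgam$. Taking cohomology of the resulting triangle, and using Grothendieck vanishing $H^{n-c+1}_{\mathfrak m}(K^c)=0$ (from $\dim K^c \le n-c$), produces the short exact sequence
\[0 \to K^{n-1} \to H^{n-c}_{\mathfrak m}(K^c) \to E \to K^n \to 0.\]
Dualizing via $(-)^{\vee}=\Hom_R(-,E)$ and invoking local duality $H^{n-c}_{\mathfrak m}(K^c)^{\vee}\cong \Ext^c_R(K^c,R)$ together with Matlis duality $E^{\vee}\cong R$ gives
\[0 \to (K^n)^{\vee} \to R \xrightarrow{\alpha} \Ext^c_R(K^c,R) \to (K^{n-1})^{\vee} \to 0.\]

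What remains is to identify $\Ext^c_R(K^c,R)$ with $\Hom_R(K^c,K^c)$ in such a way that $\alpha$ becomes the natural action map $R \to \Hom_R(K^c,K^c)$. For this I would invoke the spectral sequence $E_2^{p,q}=\Ext^p_R(K^c,K^q)\Rightarrow \Ext^{p+q}_R(K^c,R)$, which converges because $K^c$ is $I$-torsion and $\RHom_R(K^c,-)$ factors through $\mathbf{R}\Gamma_I$. Since $K^q=0$ for $q<c$, the only nonzero $E_2$-term on the anti-diagonal $p+q=c$ is $(0,c)$, giving the canonical edge isomorphism $\Hom_R(K^c,K^c)\cong\Ext^c_R(K^c,R)$. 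With these identifications, (ii) forces $\alpha$ to be an isomorphism and hence gives (i); conversely (i) forces $(K^n)^{\vee}=(K^{n-1})^{\vee}=0$, and faithfulness of Matlis duality on the $\mathfrak m$-torsion modules $K^{n-1}, K^n$ returns (ii). The technically most delicate step is the naturality verification that the map $\alpha$---constructed via the triangle and duality---really coincides with the natural action map; this parallels the arguments of \cite{HSt} and \cite{HS} in the genuine cohomological complete intersection case and is the main obstacle.
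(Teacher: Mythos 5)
Your outline is correct in substance and uses exactly the paper's toolkit, but it is routed differently from the paper's own proof, and more directly. The paper obtains Theorem \ref{1.1} as a special case of the general dimension estimate Theorem \ref{3.1} (proved by induction on $\dim R/I$) through Corollary \ref{3.2} with $l=2$; you instead use the locally-cohomological-complete-intersection hypothesis at the outset to get $\Supp H^i_I(R)\subseteq\{\mathfrak m\}$ for all $i>c$, so that the spectral sequence of Proposition \ref{2.4} degenerates, the exact sequence of Lemma \ref{2.2}(a) becomes your four-term sequence $0\to H^{n-1}_I(R)\to H^{n-c}_{\mathfrak m}(H^c_I(R))\to E\to H^n_I(R)\to 0$, and a single Matlis dualization (using $\Hom_R(H^i_{\mathfrak m}(M),E)\simeq\Ext^{n-i}_R(M,R)$ over the complete Gorenstein ring, the paper's \cite[Lemma 1.2]{HS}) finishes the argument. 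That is a legitimate shortcut: no induction and no general $l$ are needed for Theorem \ref{1.1} alone; what the paper's longer route buys is Theorem \ref{3.1} itself (valid without the local hypothesis) and Corollary \ref{3.3}. Your hyperext spectral sequence identification $\Hom_R(H^c_I(R),H^c_I(R))\simeq\Ext^c_R(H^c_I(R),R)$ is just a repackaging of Lemma \ref{2.2}(d) and is fine.

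Two points need attention. First, the step you defer --- that the map $\alpha\colon R\to\Ext^c_R(H^c_I(R),R)\simeq\Hom_R(H^c_I(R),H^c_I(R))$ obtained by dualizing the connecting map is the natural action map --- is genuinely indispensable in both directions (an abstract isomorphism $\Hom_R(H^c_I(R),H^c_I(R))\simeq R$ does not give (i), and (i) says nothing about the kernel and cokernel of an unidentified $\alpha$), so as written your proof is incomplete at its decisive point. It is, however, not a real obstacle: the duality of \cite[Lemma 1.2]{HS}, the isomorphism of Lemma \ref{2.2}(d), and the connecting map of Lemma \ref{2.2}(a) are all induced by the one embedding $H^c_I(R)[-c]\to\gam(E^{\cdot})\subseteq E^{\cdot}$, and chasing $\operatorname{id}_E$ through these identifications yields the class of the inclusion $H^c_I(R)\hookrightarrow\gam(E^{\cdot})^c$, i.e.\ $\operatorname{id}_{H^c_I(R)}$; then $R$-linearity identifies $\alpha$ with the action map. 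The paper itself discharges exactly this compatibility by invoking Local Duality and \cite{HS} (see the first sentence of the proof of Corollary \ref{3.2}), so either quote that or include the short check. Second, your four-term sequence tacitly assumes $c\le n-2$: only for $c<i\le n$ is $H^i(C^{\cdot}_R(I))\simeq H^i_I(R)$, so for $c=n-1$ or $c=n$ the modules $H^{n-1}_I(R)$, $H^n_I(R)$ do not occur in it, and indeed the equivalence fails there (for $\mathfrak m$-primary $I$ condition (i) holds while (ii) fails). The paper's actual proof runs through Corollary \ref{3.2}, which carries an extra hypothesis ($c\ge 2$; what the argument really uses is $n-1>c$, i.e.\ $\dim R/I\ge 2$), so you should make the same restriction explicit rather than prove the statement in the unrestricted form.
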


In a certain sense, condition (ii) of Theorem \ref{1.1} provides a numerical condition for the
property that the endomorphism ring of $H^c_I(R)$ is $R.$ In the case of $R$ a regular local
ring containing a field Huneke and Lyubeznik (cf. \cite[Theorem 2.9]{HL}) have shown a
topological characterization of the above condition (ii).

\begin{theorem} \label{1.2} Let $(R,\mathfrak m)$ be a local Gorenstein ring. Let $J \subset I$
denote two ideals of height $c.$
\begin{itemize}
\item[(a)] There is a natural homomorphism
\[
\Hom_R(H^c_J(R), H^c_J(R)) \to \Hom_R(H^c_I(R), H^c_I(R)).
\]
\item[(b)] Suppose that $\Rad J R_{\mathfrak p} = \Rad I R_{\mathfrak p}$ for all
$\mathfrak p \in V(I)$ with $\dim R_{\mathfrak p} \leq c + 1.$ Then the homomorphism in (a)
is an isomorphism.
\item[(c)] Let $R$ be in addition complete. Let $J$ denote a cohomologically complete
intersection contained in $I$ and satisfying the assumptions of (b). Then $R \to \Hom_R(H^c_I(R), H^c_I(R))$
is an isomorphism.
\end{itemize}
\end{theorem}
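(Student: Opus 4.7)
The plan is to handle the three parts sequentially, with (a) essentially formal, (c) a direct corollary of (a) and (b), and (b) containing the substantive work.

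For part (a), the key observation is that $\Gamma_I = \Gamma_I \circ \Gamma_J$ as functors, since $J \subset I$ forces every $I$-torsion module to be $J$-torsion. The Grothendieck composite spectral sequence
\[
E_2^{p,q} = H^p_I(H^q_J(R)) \Rightarrow H^{p+q}_I(R)
\]
has $H^q_J(R) = 0$ for $q < c$ because $\height J = c$, so in total degree $c$ only $E_2^{0,c} = \Gamma_I(H^c_J(R))$ contributes. The edge map therefore yields a natural isomorphism $H^c_I(R) \qism \Gamma_I(H^c_J(R))$, identifying $H^c_I(R)$ with the $I$-torsion submodule of $H^c_J(R)$. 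The homomorphism of (a) is then the restriction $\phi \mapsto \phi|_{H^c_I(R)}$, which makes sense because any endomorphism preserves the $I$-torsion part.

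For (b), set $N := H^c_J(R)/H^c_I(R)$. Since any $R$-homomorphism from the $I$-torsion module $H^c_I(R)$ into $H^c_J(R)$ must land in $\Gamma_I(H^c_J(R)) = H^c_I(R)$, one has $\Hom_R(H^c_I(R), H^c_J(R)) = \End_R(H^c_I(R))$. Applying $\Hom_R(-, H^c_J(R))$ to $0 \to H^c_I(R) \to H^c_J(R) \to N \to 0$ produces the exact sequence
\[
0 \to \Hom_R(N, H^c_J(R)) \to \End_R(H^c_J(R)) \to \End_R(H^c_I(R)) \to \Ext^1_R(N, H^c_J(R)),
\]
whose middle arrow is the map of (a). Thus (b) reduces to the pair of vanishings $\Hom_R(N, H^c_J(R)) = 0 = \Ext^1_R(N, H^c_J(R))$. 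The radical hypothesis guarantees $H^c_J(R)_{\mathfrak p} = H^c_I(R)_{\mathfrak p}$, and hence $N_{\mathfrak p} = 0$, at every $\mathfrak p \in V(I)$ with $\dim R_{\mathfrak p} \le c+1$, so $N$ is supported away from this locus. To upgrade this support constraint into the two Ext vanishings, the plan is a duality-flavoured analysis of $H^c_J(R)$: one invokes the Grothendieck spectral sequence $E_2^{p,q} = H^p_{\mathfrak m}(H^q_J(R)) \Rightarrow H^{p+q}_{\mathfrak m}(R)$ (which collapses onto the antidiagonal $p+q = n$ because $R$ is Gorenstein) together with local duality to translate the codimension information on $\Supp N$ into vanishing of $\Hom$ and $\Ext^1$ into $H^c_J(R)$. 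This duality step is the main technical obstacle, and will likely require passage to the completion $\hat R$ in order to invoke Matlis duality cleanly.

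Finally for (c), since $J$ is a cohomological complete intersection, \cite{HSt} and \cite[Lemma 2.8]{HS} furnish a canonical isomorphism $R \qism \End_R(H^c_J(R))$. Composing with the isomorphism of (b) produces $R \qism \End_R(H^c_I(R))$, and tracing through the constructions of (a) and of the Hellus--St\"uckrad map confirms that the composite is the natural structural homomorphism $R \to \End_R(H^c_I(R))$.
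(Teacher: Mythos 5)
Your treatment of (a) is correct, and it takes a different (arguably more direct) route than the paper: you realize $H^c_I(R)=\Gamma_I(H^c_J(R))$ as the $I$-torsion submodule of $H^c_J(R)$ and define the map by restriction of endomorphisms, whereas the paper produces the inclusion $H^c_I(R)\hookrightarrow H^c_J(R)$ from the sequences $0\to I^{\alpha}/J^{\alpha}\to R/J^{\alpha}\to R/I^{\alpha}\to 0$ and then transports everything through the identification $\Hom_R(H^c_I(R),H^c_I(R))\simeq \Ext^c_R(H^c_I(R),R)$ of Lemma \ref{2.2} (d). Part (c) is handled exactly as in the paper.

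The gap is in (b), which is where all the substance lies. You reduce the claim to the two vanishings $\Hom_R(N,H^c_J(R))=0=\Ext^1_R(N,H^c_J(R))$ for $N=H^c_J(R)/H^c_I(R)$ and then only announce a ``duality-flavoured'' plan; the actual argument is absent, and as sketched it is unlikely to close. First, the hypothesis only yields $N_{\mathfrak p}=0$ for $\mathfrak p\in V(I)$ with $\dim R_{\mathfrak p}\le c+1$; since $\Supp_R N\subseteq V(J)$ and not $V(I)$, ``supported away from this locus'' still has to be upgraded to the dimension bound $\dim N\le d-2$ ($d=\dim R/I$) that any duality argument requires -- the paper gets this from $\dim_R I^{\alpha}/J^{\alpha}\le d-2$ together with the embedding of $N$ into $\varinjlim \Ext^c_R(I^{\alpha}/J^{\alpha},R)$, and you never state or use such a bound. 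Second, and more seriously, your second vanishing is strictly stronger than what is needed (only the connecting map $\Hom_R(H^c_I(R),H^c_J(R))\to \Ext^1_R(N,H^c_J(R))$ must vanish), and it does not follow from a support bound via local duality: $\Gamma_J(E^{\cdot})$ is not an injective resolution of $H^c_J(R)$, so $\Ext^1_R(N,H^c_J(R))$ is controlled not only by $\Ext^{c+1}_R(N,R)$ (which does vanish once $\dim N\le d-2$) but also by terms involving the higher modules $H^i_J(R)$, $i>c$, about which (b) assumes nothing. The paper sidesteps this entirely: via Lemma \ref{2.2} (d) the map of (a) becomes $\Ext^c_R(H^c_J(R),R)\to \Ext^c_R(H^c_I(R),R)$ induced by $0\to H^c_I(R)\to H^c_J(R)\to X\to 0$, so the only obstructions are $\Ext^c_R(X,R)$ and $\Ext^{c+1}_R(X,R)$, which vanish by Matlis/local duality after completion because $\dim X\le d-2$; equivalently one checks that $H^d_{\mathfrak m}(H^c_I(R))\to H^d_{\mathfrak m}(H^c_J(R))$ is bijective. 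To complete your proof you would have to either carry out the duality step on the $\Ext^{\cdot}_R(-,R)$ side as the paper does, or find a genuinely different argument for surjectivity; as written, (b) is not proved.
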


Our results are based on a certain estimate of $\dim H^i_I(R), i > c,$ see Theorem \ref{3.1}.
In the case of a regular local ring partial results of this type have been used by
Ken-Ichiroh Kawasaki (cf. \cite{kK}) for the study of Lyubeznik numbers (cf. \cite{gL}
for their definition). Here we use the truncation complex as invented in \cite[Section 2]{HS}
(cf. \ref{2.1}). Moreover it provides some technical statements about the endomorphism ring
of $H^c_I(R), c = \height I,$ (cf. Lemma \ref{2.2}).

In the terminology the author follows the paper \cite{HS}.

\section{On the truncation complex}
Let $(R, \mathfrak m, k)$ denote a local Gorenstein ring with $n = \dim R.$
First of all we will recall the truncation complex as it was introduced in \cite[Section 2]{HS}
and in a different context in \cite[\S 4]{pS2}.
Let $R \qism E^{\cdot}$ denote a minimal injective resolution of $R$ as an $R$-module. It is a
well-known fact that
\[
E^i \simeq \oplus_{\mathfrak p \in \Spec R, \height \mathfrak p = i} E_R(R/\mathfrak p),
\]
where $E_R(R/\mathfrak p)$ denotes the injective hull of $R/\mathfrak p$ (cf. \cite{hB} for
these and related results about Gorenstein rings).

Now let $I \subset R$ denote an ideal and $c = \height I.$ Then $d = \dim R/I = n - c.$ The local
cohomology modules $H^i_I(R), i \in \mathbb Z,$ are -- by definition -- the cohomology modules of
the complex $\gam (E^{\cdot}).$ Because of $\gam(E_R(R/\mathfrak p)) = 0$ for all
$\mathfrak p \not\in V(I)$ it follows that $\gam(E^{\cdot})^i = 0$ for all $i < c.$ Therefore
$H^c_I(R) = \Ker(\gam(E^{\cdot})^c \to \gam(E^{\cdot})^{c+1}).$ This observation provides an
embedding $H^c_I(R)[-c] \to \gam(E^{\cdot})$ of complexes of $R$-modules.

\begin{definition} \label{2.1} The cokernel of the embedding  $H^c_I(R)[-c] \to \gam (E^{\cdot})$
is defined as $C^{\cdot}_R(I),$ the truncation complex with respect to $I.$ So there is a short
exact sequence of complexes of $R$-modules
\[
0 \to H^c_I(R)[-c] \to \gam (E^{\cdot}) \to C^{\cdot}_R(I) \to 0.
\]
In particular it follows that $H^i(C^{\cdot}_R(I)) = 0$ for $i \leq c$ or $i > n$ and
$H^i(C^{\cdot}_R(I)) \simeq H^i_I(R)$ for $c < i \leq n.$
\end{definition}

For the first we need some basic results about the truncation complex. For more details we refer
to the exposition in \cite[Section 2]{HS}.

\begin{lemma} \label{2.2} With the previous notation there are the following results:
\begin{itemize}
\item[(a)] There are an exact sequence
 \[
0 \to H^{n-1}_{\mathfrak m}(C^{\cdot}_R(I)) \to H^d_{\mathfrak m}(H^c_I(R)) \to E
\to H^n_{\mathfrak m}(C^{\cdot}_R(I)) \to 0,
 \]
and isomorphisms $H^{i-c}_{\mathfrak m}(H^c_I(R)) \simeq H^{i-1}_{\mathfrak m}(C^{\cdot}_R(I))$
for $i < n.$
\item[(b)] $H^d_{\mathfrak m}(H^c_I(R)) \not= 0$ and $H^{i-c}_{\mathfrak m}(H^c_I(R)) = 0$
for $i > n.$
\item[(c)] Let $\mathfrak p \in V(I)$ denote a prime ideal. Then there is an isomorphism
\[
C^{\cdot}_R(I) \otimes_R R_{\mathfrak p} \simeq C^{\cdot}_{R_{\mathfrak p}}(IR_{\mathfrak p})
\]
provided $\height I = \height IR_{\mathfrak p}.$
\item[(d)] There is a natural isomorphism $\Hom_R(H^c_I(R), H^c_I(R)) \simeq \Ext_R^c(H^c_I(R), R).$
\end{itemize}
\end{lemma}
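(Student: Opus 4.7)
The plan is to exploit the defining short exact sequence $0\to H^c_I(R)[-c]\to\gam(E^{\cdot})\to C^{\cdot}_R(I)\to 0$ by pushing it through three different functors---$\Rgam$, localization at a prime, and $R\Hom_R(H^c_I(R),-)$---and reading off one part of the lemma from each resulting long exact sequence.

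For part (a), I would apply $\Rgam$ to the defining triangle. Since $\Gamma_{\mathfrak m}\circ\Gamma_I=\Gamma_{\mathfrak m}$ and each component of $\gam(E^{\cdot})$ is injective (a direct sum of hulls $E_R(R/\mathfrak p)$ with $\mathfrak p\supseteq I$), we have $\Rgam(\gam(E^{\cdot}))\simeq\Rgam(R)\simeq E[-n]$, the last isomorphism because $R$ is Gorenstein. The associated long cohomology sequence then collapses: for $i<n$ only two adjacent terms survive and yield the claimed isomorphisms $H^{i-c}_{\mathfrak m}(H^c_I(R))\simeq H^{i-1}_{\mathfrak m}(C^{\cdot}_R(I))$, while in the range around $i=n$ the unique nonzero contribution $E$ of $\Rgam(R)$, combined with the vanishing $H^{d+1}_{\mathfrak m}(H^c_I(R))=0$ supplied by (b), produces the four-term exact sequence stated. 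Part (b) reduces to showing $\dim H^c_I(R)=d$; once this is known, Grothendieck non-vanishing and vanishing give both claims. The inequality $\dim H^c_I(R)\le d$ is immediate from $\Supp H^c_I(R)\subseteq V(I)$, while for the matching lower bound I would select a minimal prime $\mathfrak p$ of $I$ with $\dim R/\mathfrak p=d$---available because $R$ Gorenstein is catenary and equidimensional, forcing $\height\mathfrak p=c$---and observe that $H^c_I(R)_{\mathfrak p}\simeq H^c_{\mathfrak p R_{\mathfrak p}}(R_{\mathfrak p})$ is the nonzero top local cohomology of the $c$-dimensional Cohen--Macaulay ring $R_{\mathfrak p}$. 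Part (c) is pure flat base change: the localization $(E^{\cdot})_{\mathfrak p}$ is a minimal injective resolution of $R_{\mathfrak p}$, and since $\Gamma_I$ commutes with localization in a Noetherian ring, the assumption $\height IR_{\mathfrak p}=c$ ensures that the defining sequence of $C^{\cdot}_R(I)$ localizes to the defining sequence of $C^{\cdot}_{R_{\mathfrak p}}(IR_{\mathfrak p})$, whence the isomorphism of cokernels.

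Part (d) looks the most substantive and is where I expect the main obstacle. Applying $R\Hom_R(H^c_I(R),-)$ to the distinguished triangle from the defining sequence, the middle term $R\Hom_R(H^c_I(R),\gam(E^{\cdot}))$ represents $R\Hom_R(H^c_I(R),R\Gamma_I(R))$, which I would identify with $R\Hom_R(H^c_I(R),R)$ via the derived adjunction
\[
R\Hom_R(M,R\Gamma_I N)\simeq R\Hom_R(M,N)\quad\text{for $I$-torsion $M$},
\]
formal from the fact that $R\Gamma_I$ is right adjoint to the inclusion of the $I$-torsion subcategory of $D(R)$. It then remains to kill the two error terms $\Ext^{c-1}_R(H^c_I(R),C^{\cdot}_R(I))$ and $\Ext^c_R(H^c_I(R),C^{\cdot}_R(I))$, and both vanish via the hyperExt spectral sequence
\[
E_2^{p,q}=\Ext^p_R(H^c_I(R),H^q(C^{\cdot}_R(I)))\Longrightarrow\Ext^{p+q}_R(H^c_I(R),C^{\cdot}_R(I)),
\]
since $H^q(C^{\cdot}_R(I))=0$ for $q\le c$ forces $p<0$ in any total degree $\le c$. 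The long exact sequence in degree $c$ then delivers the asserted isomorphism $\Hom_R(H^c_I(R),H^c_I(R))\simeq\Ext^c_R(H^c_I(R),R)$. The delicate point is the $I$-torsion adjunction; once it is in place, every other piece assembles mechanically from the defining triangle.
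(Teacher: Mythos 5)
Your treatments of (a) and (c) follow the paper's route: apply $\Rgam(\cdot)$ to the defining short exact sequence, use $\Rgam(\gam(E^{\cdot})) \simeq E[-n]$ for the Gorenstein ring, and read off the long exact sequence; localize and use that $E^{\cdot}\otimes_R R_{\mathfrak p}$ is a minimal injective resolution of $R_{\mathfrak p}$. Your proof of (d) is a correct derived-category variant of the paper's argument: the torsion adjunction $\RHom_R(M,{\rm R}\Gamma_I N)\simeq \RHom_R(M,N)$ for $I$-torsion $M$ is, at the level of the injective resolution, exactly the paper's observation that $\Hom_R(X,E_R(R/\mathfrak p))=0$ when $\Supp_R X\subset V(I)$ and $\mathfrak p\notin V(I)$, so that $\Hom_R(H^c_I(R),\gam(E^{\cdot}))=\Hom_R(H^c_I(R),E^{\cdot})$; the paper then simply compares kernels in degree $c$, while you kill the hyperext of the truncation complex in degrees $\leq c$ — both work, and yours buys nothing extra but is perfectly sound provided you are willing to quote (or prove) the adjunction.

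The genuine gap is the non-vanishing half of (b). You reduce it to $\dim H^c_I(R)=d$ and then invoke ``Grothendieck non-vanishing,'' but that theorem ($H^{\dim M}_{\mathfrak m}(M)\neq 0$) is a statement about \emph{finitely generated} modules, and $H^c_I(R)$ is almost never finitely generated; for arbitrary modules $\dim\Supp_R M=d$ does not force $H^d_{\mathfrak m}(M)\neq 0$. The paper's own Example \ref{4.2} is a counterexample to the principle you are using: there $H^3_I(R)\simeq E_R(R/\mathfrak p)$ with $\dim R/\mathfrak p=1$, yet $H^p_{\mathfrak m}(H^3_I(R))=0$ for all $p$, since this module is injective and has no $\mathfrak m$-torsion. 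So your argument establishes only the vanishing statement $H^{i-c}_{\mathfrak m}(H^c_I(R))=0$ for $i>n$ (Grothendieck vanishing does hold for arbitrary modules), not $H^d_{\mathfrak m}(H^c_I(R))\neq 0$. The paper does not reprove this either: it quotes \cite[Corollary 2.9]{HS} together with the Matlis duality statement \cite[Lemma 1.2]{HS}, where the non-vanishing is obtained by passing to the completion and dualizing the structure coming from the truncation complex (in effect, the Matlis dual of the natural map $H^d_{\mathfrak m}(H^c_I(R))\to E$ is a nonzero map related to the identity endomorphism of $H^c_I(R)$). Some such duality input is needed; a dimension count cannot supply it. Note also that your four-term sequence in (a) uses $H^{d+1}_{\mathfrak m}(H^c_I(R))=0$ from (b); that is harmless, since the vanishing half of (b) is independent of the non-vanishing half, but the order of the argument should be made explicit.
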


\begin{proof} For the proof of (a) apply the derived functor $\Rgam (\cdot)$ to the short
exact sequence as given in \ref{2.1}. Then $\Rgam (\gam (E^{\cdot})) \simeq E[-n].$ So the
long exact cohomology sequence of the corresponding exact sequence of complexes provides the
claim (cf. \cite[Lemma 2.2]{HS} for the details). The statement (b) is shown in
\cite[Corollary 2.9]{HS} and \cite[Lemma 1.2]{HS}.

For the proof of (c) localize the exact sequence of \ref{2.1} at $\mathfrak p.$ Then there
is a short exact sequence of complexes
\[
0 \to H^c_{IR_{\mathfrak p}}(R_{\mathfrak p})[-c] \to
\Gamma_{IR_{\mathfrak p}}(E^{\cdot}_{R_{\mathfrak p}})
\to C^{\cdot}_R(I) \otimes_R R_{\mathfrak p} \to 0.
\]
To this end recall first that $c = \height IR_{\mathfrak p} = \height I$ and that the local
cohomology commutes with localization. Furthermore $E^{\cdot} \otimes R_{\mathfrak p}$ is
isomorphic to the minimal injective resolution $E^{\cdot}_{R_{\mathfrak p}}$ of $R_{\mathfrak p}. $
Then the definition of the truncation complex proves the claim.

Finally, we prove (d). As it is shown at the the beginning of this section there is an exact sequence $0 \to H^c_I(R) \to \gam(E^{\cdot})^c \to \gam(E^{\cdot})^{c+1}.$ This induces a natural commutative diagram with exact rows
\[
  \begin{array}{cccccc}
    0 \to & \Hom_R(H^c_I(R), H^c_I(R)) & \to & \Hom_R(H^c_I(R),\gam(E^{\cdot}))^c  & \to  & \Hom_R(H^c_I(R),\gam(E^{\cdot}))^{c+1} \\
      &     \downarrow             &      & \downarrow                          &      & \downarrow \\
    0 \to & \Ext_R^c(H^c_I(R), R)        & \to  & \Hom_R(H^c_I(R), E^{\cdot})^c       & \to  & \Hom_R(H^c_I(R), E^{\cdot})^{c+1} \\
  \end{array}
\] 
because $\gam(E^{\cdot})$ is a subcomplex of $E^{\cdot}.$ The two last vertical homomorphisms 
are isomorphisms. This follows because $\Hom_R(X, E_R(R/\mathfrak p)) = 0$ for an $R$-module $X$ 
with $\Supp_R X \subset V(I)$ and $\mathfrak p \not\in V(I).$ Therefore the first vertical map 
is also an isomorphism. 
\end{proof}

In order to compute the local cohomology of the truncation complex $C^{\cdot}_R(I)$ there is
the following spectral sequence for the computation of the hyper cohomology of a complex.

\begin{proposition} \label{2.4} With the notation of \ref{2.1} there is the following spectral
sequence
\[
E_2^{p,q} = H^p_{\mathfrak m}(H^q(C^{\cdot}_R(I))) \Longrightarrow E^{p+q}_{\infty} =
H^{p+q}_{\mathfrak m}(C^{\cdot}_R(I)),
\]
where $H^q(C^{\cdot}_R(I)) = 0$ for $i \leq c$ and $i > n$ and $H^q(C^{\cdot}_R(I))
\simeq H^q_I(R)$ for $c < i \leq n.$
\end{proposition}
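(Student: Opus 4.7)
The plan is to recognize this as a direct instance of the standard hyperderived functor (Grothendieck) spectral sequence applied to the truncation complex $C^{\cdot}_R(I)$. Concretely, I would first replace $C^{\cdot}_R(I)$ by a Cartan--Eilenberg resolution $I^{\cdot,\cdot}$ by injective $R$-modules; this exists because the cohomology modules of $C^{\cdot}_R(I)$ are ordinary $R$-modules. Applying the left-exact functor $\Gamma_{\mathfrak m}$ term by term produces a double complex whose total complex represents $\Rgam (C^{\cdot}_R(I))$.

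Filtering this double complex by columns yields the familiar spectral sequence whose $E_2$-page records the cohomology with respect to the horizontal (resolution) direction of the vertical cohomology. Standard bookkeeping identifies
\[
E_2^{p,q} = H^p_{\mathfrak m}(H^q(C^{\cdot}_R(I))) \Longrightarrow H^{p+q}(\Rgam (C^{\cdot}_R(I))) = H^{p+q}_{\mathfrak m}(C^{\cdot}_R(I)),
\]
which is precisely the spectral sequence in the claim. The identification of the cohomology of the truncation complex, namely $H^q(C^{\cdot}_R(I)) = 0$ for $q \leq c$ or $q > n$ and $H^q(C^{\cdot}_R(I)) \simeq H^q_I(R)$ for $c < q \leq n$, has already been recorded in Definition \ref{2.1}; one simply reads it off the long exact cohomology sequence associated with $0 \to H^c_I(R)[-c] \to \gam(E^{\cdot}) \to C^{\cdot}_R(I) \to 0$, taking into account that $H^i(\gam(E^{\cdot})) = H^i_I(R)$ and that the map $H^c_I(R) \to \gam(E^{\cdot})^c$ is the embedding identifying $H^c_I(R)$ with the kernel in degree $c$.

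The main obstacle, which is really only a matter of verifying hypotheses, is convergence. Here I would observe that $C^{\cdot}_R(I)$ is a bounded complex: since $E^i = 0$ for $i > n$ (the ring is Gorenstein of dimension $n$) and $\gam(E^{\cdot})^i = 0$ for $i < c$, the complex $\gam(E^{\cdot})$ is concentrated in degrees $[c,n]$, and hence so is $C^{\cdot}_R(I)$. Consequently both the horizontal and vertical filtrations on the double complex are finite in each total degree, so the spectral sequence converges strongly, as required.
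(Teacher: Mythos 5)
Your proposal is correct and is essentially the paper's own argument: the paper simply invokes the hypercohomology spectral sequence (citing Weibel) and reads off the initial terms from the definition of the truncation complex, exactly as you do, with your Cartan--Eilenberg construction and the boundedness-of-$C^{\cdot}_R(I)$ convergence check just supplying the standard details the paper leaves implicit.
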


\begin{proof} The spectral sequence is a particular case for the spectral sequence of hyper
cohomology (cf. \cite{cW}). For the initial terms check the definition of the truncation complex.
\end{proof}

In the following we shall use the notion of the dimension $\dim X$ for $R$-modules $X$ with
are not necessarily finitely generated. This is defined by $\dim X = \dim \Supp_R X,$ where
the dimension of the support is understood in the Zariski topology of $\Spec R.$ In particular,
$\dim X < 0$ means $X = 0.$

\begin{lemma} \label{2.5} With the notation above we have the following results:
\begin{itemize}
\item[(a)] $\dim H^i_I(R) \leq n - i$ for all $i \geq c = \height I.$
\item[(b)] $\dim H^c_I(R) = \dim R/I.$
\item[(c)] If $\dim H^i_I(R) < n - i$ for all $i > c,$ then $R/I$ is unmixed, i.e.
$c = \height IR_{\mathfrak p}$
for all minimal $\mathfrak p \in V(I).$
\end{itemize}
\end{lemma}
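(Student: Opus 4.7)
The plan is to reduce all three parts to the standard interaction between local cohomology and localization, together with the Cohen--Macaulay property of $R$ (which holds since $R$ is Gorenstein), giving $\dim R_{\mathfrak p} + \dim R/\mathfrak p = n$ for every prime $\mathfrak p$.

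For (a), I would take an arbitrary $\mathfrak p \in \Supp_R H^i_I(R)$. Since local cohomology commutes with localization, $H^i_{IR_{\mathfrak p}}(R_{\mathfrak p}) = H^i_I(R)_{\mathfrak p} \neq 0$, and Grothendieck vanishing forces $i \leq \dim R_{\mathfrak p}$. Using the dimension equality above, this rewrites as $\dim R/\mathfrak p \leq n - i$. Taking the supremum over all such $\mathfrak p$ gives the bound.

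For (b), the inequality $\dim H^c_I(R) \leq n - c = \dim R/I$ is immediate from (a). For the reverse inequality I would choose a minimal prime $\mathfrak p$ of $I$ realizing $\dim R/\mathfrak p = \dim R/I$; Cohen--Macaulayness then forces $\height \mathfrak p = c$. Since $\mathfrak p$ is minimal over $I$, the ideal $IR_{\mathfrak p}$ is $\mathfrak p R_{\mathfrak p}$-primary, so
\[
H^c_I(R)_{\mathfrak p} \simeq H^c_{IR_{\mathfrak p}}(R_{\mathfrak p}) \simeq H^c_{\mathfrak p R_{\mathfrak p}}(R_{\mathfrak p}),
\]
and the last module is nonzero because $R_{\mathfrak p}$ is a Cohen--Macaulay local ring of dimension $c$. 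Therefore $\mathfrak p \in \Supp H^c_I(R)$, and $\dim H^c_I(R) \geq \dim R/\mathfrak p = n - c$.

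For (c), I would argue by contradiction. Suppose $R/I$ is not unmixed, so there is a minimal prime $\mathfrak q$ of $I$ with $r := \height \mathfrak q > c$. The argument from (b), applied with $\mathfrak q$ in place of $\mathfrak p$ and $r$ in place of $c$, shows $\mathfrak q \in \Supp H^r_I(R)$ with $\dim R/\mathfrak q = n - r$. Hence $\dim H^r_I(R) \geq n - r$, which contradicts the hypothesis $\dim H^i_I(R) < n - i$ applied at $i = r > c$.

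There is no real obstacle here; the whole lemma rests on localizing at a well-chosen prime and exploiting that under the Gorenstein (hence Cohen--Macaulay) hypothesis, height and codimension add up to $n$. The only subtlety worth stating carefully is the non-vanishing $H^{\height \mathfrak p}_{\mathfrak p R_{\mathfrak p}}(R_{\mathfrak p}) \neq 0$ for $\mathfrak p$ minimal over $I$, which is what produces the required element of the support in both (b) and (c).
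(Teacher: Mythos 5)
Your proposal is correct and follows essentially the same route as the paper: parts (b) and (c) are proved exactly as in the text, by localizing at a suitable minimal prime of $I$ and invoking Grothendieck non-vanishing together with $\dim R_{\mathfrak p} + \dim R/\mathfrak p = n$ in the Cohen--Macaulay (Gorenstein) ring $R$. The only difference is that for (a) the paper simply cites the result as well known, whereas you supply the standard localization/Grothendieck-vanishing argument (which in fact needs only the general inequality $\dim R_{\mathfrak p} + \dim R/\mathfrak p \leq n$), which is perfectly fine.
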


\begin{proof} (a): This result is well-known (cf. for instance \cite{kK}). 

(b): Let $\mathfrak p \in V(I)$ denote a minimal prime ideal in $V(I)$ such that
$\dim R_{\mathfrak p} = c.$ Then $H^c_I(R) \otimes_R R_{\mathfrak p} \simeq
H^c_{\mathfrak p R_{\mathfrak p}}(R_{\mathfrak p}) \not= 0$ by the Grothendieck non-vanishing
result. So, $\mathfrak p \in \Supp H^c_I(R)$ and  $\dim R/\mathfrak p = d.$ Together with (a)
this proves the claim.

(c): Let $\mathfrak p \in V(I)$ minimal with $h := \height IR_{\mathfrak p} > c.$ Then
$h = \dim R_{\mathfrak p}$ and
\[
0 \not= H^h_{\mathfrak p R_{\mathfrak p}}(R_{\mathfrak p}) \simeq H^h_I(R) \otimes_R R_{\mathfrak p}.
\]
So, it
implies that $\mathfrak p \in \Supp H^h_I(R)$ with $\dim R/\mathfrak p + h = n,$ a contradiction.
\end{proof}

\begin{proof} {\bf Theorem \ref{1.2}.} Let $\alpha \geq 1$ denote an integer. The inclusion
$J \subset I$ induces a short exact sequence $0 \to I^{\alpha}/J^{\alpha} \to R/J^{\alpha}
\to R/I^{\alpha} \to 0.$ By applying the long exact cohomology sequence
with respect to $\Ext^{\cdot}_R(\cdot, R)$ and passing to the direct limit there is the following
exact sequence
\[
0 \to H^c_I(R) \to H^c_J(R) \stackrel{\phi}{\to} \varinjlim \Ext^c_R(I^{\alpha}/J^{\alpha}, R).
\]
Recall that $\grade I^{\alpha}/J^{\alpha} \geq c$ for all $\alpha.$ Let $X = \im \phi.$
The short exact sequence $0 \to H^c_I(R) \to H^c_J(R) \to X \to 0$ provides (after applying
$\Ext^{\cdot}_R(\cdot, R)$) a natural homomorphism
\[
\Ext^c_R(H^c_J(R),R) \to \Ext^c_R(H^c_I(R),R).
\]
By Lemma \ref{2.2} (d) this proves the statement in (a).

In order to prove (b) we may assume that $J R_{\mathfrak p} = I R_{\mathfrak p}$ for all
$\mathfrak p \in V(I)$ with $\dim R_{\mathfrak p} \leq c + 1.$ This follows because local
cohomology does not change by passing to the radical. Next we claim that $\dim X \leq d - 2.$
This follows because $\dim_R I^{\alpha}/J^{\alpha} \leq d-2$ for all $\alpha \in \mathbb N$ under the additional assumptions of $J \subset I.$  Moreover $\dim X \leq d - 2$ is true by a localization argument
and the embedding $X \to \varinjlim \Ext^c_R(I^{\alpha}/J^{\alpha}, R).$

By passing to the completion and because of the Matlis duality (cf. \cite[Lemma 1.2]{HS})
it will be enough to show that the natural homomorphism $H^d_{\mathfrak m}(H^c_I(R)) \to
H^d_{\mathfrak m}(H^c_J(R))$ is an isomorphism. Now this is true by virtue of the local cohomology
with respect to the maximal ideal applied to the short exact sequence $0 \to H^c_I(R) \to H^c_J(R)
\to X \to 0$ and $\dim X \leq d-2.$

For the proof of (c) recall that for a cohomologically complete intersection $J$ it is known that
the endomorphism ring of $H^c_J(R)$ is isomorphic to $R$ (cf. \cite{HSt} or \cite[Lemma 3.3]{HS}).
\end{proof}

\section{Dimensions of Local Cohomology}
As before let $(R, \mathfrak m)$ denote a $n$-dimensional
Gorenstein ring. Let $I \subset R$ be an ideal with $c = \height I$
and $\dim R/I = n -c.$ We prove the following theorem in order to estimates the dimension of local cohomology modules. To this
end let us fix the abbreviation $h(\mathfrak p) = \dim R_{\mathfrak
p} - c$ for a prime ideal $\mathfrak p \in V(I).$

\begin{theorem} \label{3.1} Let $l \geq 1$ denote an integer. With the previous notation the following conditions
are equivalent:
 \begin{itemize}
  \item[(i)] $\dim H^i_I(R) \leq n - l -i$ for all $i > c.$
  \item[(ii)] For all $\mathfrak p \in V(I)$ the natural map
\[
 H^{h(\mathfrak p)}_{\mathfrak pR_{\mathfrak p}}(H^c_{IR_{\mathfrak p}}(R_{\mathfrak p})) \to E(k(\mathfrak p))
\]
is surjective resp. bijective if $l \geq 2$ and 
\[
H^i_{\mathfrak pR_{\mathfrak p}}(H^c_{IR_{\mathfrak p}}(R_{\mathfrak p})) = 0
\]
for all $h(\mathfrak p) -l + 1 < i < h(\mathfrak p).$
 \end{itemize}
\end{theorem}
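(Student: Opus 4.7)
The plan is to interpret both (i) and (ii) as vanishing statements for the local cohomology of the truncation complex $C^\cdot_R(I)$ and then match them via the hypercohomology spectral sequence of Proposition \ref{2.4}. I would first reformulate (ii): applying Lemma \ref{2.2}(a) over $R_{\mathfrak{p}}$ to the truncation complex of $IR_{\mathfrak{p}}$, the surjectivity (resp.\ bijectivity when $l\geq 2$) onto $E(k(\mathfrak{p}))$ in the four-term exact sequence, together with the required vanishing of $H^i_{\mathfrak{p}R_{\mathfrak{p}}}(H^c_{IR_{\mathfrak{p}}}(R_{\mathfrak{p}}))$ for $h(\mathfrak{p})-l+1<i<h(\mathfrak{p})$, translates through the isomorphisms $H^{i-c}_{\mathfrak{p}R_{\mathfrak{p}}}(H^c_{IR_{\mathfrak{p}}}(R_{\mathfrak{p}}))\simeq H^{i-1}_{\mathfrak{p}R_{\mathfrak{p}}}(C^\cdot_{R_{\mathfrak{p}}}(IR_{\mathfrak{p}}))$ into the single condition
\[
H^j_{\mathfrak{p}R_{\mathfrak{p}}}(C^\cdot_{R_{\mathfrak{p}}}(IR_{\mathfrak{p}})) = 0 \text{ for all } j > \dim R_{\mathfrak{p}} - l, \text{ for every } \mathfrak{p} \in V(I).
\]
A short check shows that either hypothesis forces $I$ to be unmixed: for (i) by Lemma \ref{2.5}(c), and for (ii) by noting that at a minimal prime $\mathfrak{p}$ of $I$ with $\height \mathfrak{p}>c$ the domain $H^{h(\mathfrak{p})}_{\mathfrak{p}R_{\mathfrak{p}}}(H^c_{IR_{\mathfrak{p}}}(R_{\mathfrak{p}}))$ vanishes while $E(k(\mathfrak{p}))$ does not. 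Thus Lemma \ref{2.2}(c) identifies $C^\cdot_R(I)_{\mathfrak{p}}$ with $C^\cdot_{R_{\mathfrak{p}}}(IR_{\mathfrak{p}})$ at every $\mathfrak{p}\in V(I)$.

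For (i) $\Rightarrow$ (ii) I localize at an arbitrary $\mathfrak{p}\in V(I)$; Gorenstein catenarity propagates the dimension bound in (i) to $\dim_{R_{\mathfrak{p}}} H^q_{IR_{\mathfrak{p}}}(R_{\mathfrak{p}})\leq \dim R_{\mathfrak{p}}-l-q$ for all $q>c$. Grothendieck vanishing then annihilates every term $E_2^{p,q}$ of the spectral sequence of Proposition \ref{2.4} with $p+q>\dim R_{\mathfrak{p}}-l$, and so the abutment $H^j_{\mathfrak{p}R_{\mathfrak{p}}}(C^\cdot_{R_{\mathfrak{p}}}(IR_{\mathfrak{p}}))$ vanishes for $j>\dim R_{\mathfrak{p}}-l$, which is the reformulated (ii).

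The converse I would attack by induction on $\dim R$; the base $\dim R=c$ is trivial since $V(I)=\{\mathfrak{m}\}$ makes (i) vacuous and (ii) reduces to the Gorenstein duality identity $H^c_I(R)=E$. For $\dim R>c$ and any $\mathfrak{p}\in V(I)\setminus\{\mathfrak{m}\}$, the restriction of (ii) to $R_{\mathfrak{p}}$ is again (ii) over that smaller ring, so the inductive hypothesis yields (i) on every proper localization. Varying $\mathfrak{p}$ and using catenarity, this forces $\dim R/\mathfrak{q}\leq n-l-q$ for every $\mathfrak{q}\in \Supp H^q_I(R)\setminus\{\mathfrak{m}\}$ and $q>c$, hence
\[
\dim H^q_I(R)\leq \max(0,\,n-l-q) \text{ for every } q>c.
\]
This settles (i) in the range $c<q\leq n-l$ and reduces the rest to $H^q_I(R)=0$ for $q>\max(c,\,n-l)$. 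For such $q$ the module $H^q_I(R)$ is $\mathfrak{m}$-torsion, so in the spectral sequence of Proposition \ref{2.4} applied to $R$ itself one has $E_2^{0,q}=H^q_I(R)$ and every target $E_2^{r,q-r+1}$ of an outgoing differential ($r\geq 2$) vanishes: for $q-r+1\leq c$ by Definition \ref{2.1}, for $c<q-r+1\leq n-l$ by the dimension bound above and Grothendieck vanishing (using $q>n-l-1$), and for $q-r+1>n-l$ by $\mathfrak{m}$-torsionness. Incoming differentials vanish for degree reasons. Consequently $H^q_I(R)=E_\infty^{0,q}$ is a quotient of $H^q_{\mathfrak{m}}(C^\cdot_R(I))$, which is zero by the reformulated (ii).

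The principal difficulty is this last direction: choosing an inductive invariant that cooperates with the purely local nature of (ii), and arranging the spectral-sequence degrees so that three distinct vanishing mechanisms (truncation, inductive dimension bound, and $\mathfrak{m}$-torsion) simultaneously kill every differential hitting $E_r^{0,q}$ in the critical range $q>n-l$, leaving just enough room for the hypothesis on $H^q_{\mathfrak{m}}(C^\cdot_R(I))$ to finish the argument.
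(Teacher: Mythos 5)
Your proposal is correct and follows essentially the same route as the paper: translate (ii) into vanishing of $H^j_{\mathfrak m}(C^{\cdot}_R(I))$ (and its local analogues) via Lemma \ref{2.2}(a), run the spectral sequence of Proposition \ref{2.4} with Grothendieck vanishing for (i) $\Rightarrow$ (ii), and for the converse establish unmixedness, prove $\dim H^q_I(R)\leq\max\{0,n-l-q\}$ by induction via localization, and let the degenerate $E^{0,q}$-terms identify $H^q_I(R)$ with a quotient of $H^q_{\mathfrak m}(C^{\cdot}_R(I))=0$. Your minor deviations (induction on $\dim R$ rather than on $\dim R/I$, and deducing unmixedness from Cohen--Macaulay vanishing at minimal primes instead of Lemma \ref{2.2}(b)) are cosmetic and equally valid.
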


\begin{proof} (i) $\Longrightarrow$ (ii): By virtue of Lemma \ref{2.5} it follows that $R/I$ is unmixed, i.e.
$c = \height I = \height IR_{\mathfrak p}$ for all minimal prime ideals $\mathfrak p\in V(I).$ In particular
it implies that $h(\mathfrak p) = \dim R_{\mathfrak p}/I R_{\mathfrak p}$ for all prime ideals $\mathfrak p \in V(I).$
Moreover
\[
\dim R/\mathfrak p + \dim H^i_{IR_{\mathfrak p}}(R_{\mathfrak p}) \leq \dim H^i_I(R)
\]
because the localization
commutes with local cohomology. So our assumption (i) implies that $\dim H^i_{\mathfrak p R_{\mathfrak p}}(R_{\mathfrak p})
\leq \dim R_{\mathfrak p} - l -i$ for all $i > \height IR_{\mathfrak p} = c.$ Therefore it will be enough to
prove the statement in (ii) for $\mathfrak p = \mathfrak m,$ the maximal ideal of $(R, \mathfrak m).$

By virtue of Lemma \ref{2.2} (a) it will be enough to show the vanishing of $H^i_{\mathfrak m}(C^{\cdot}_R(I)$ for all
$i > n-l.$ To this end consider the spectral sequence of Proposition \ref{2.4}. By our assumption we have for the
initial terms $E_2^{p,q} = H^p_{\mathfrak m}(H^q_I(R)) = 0$ for all $p + q > n - l,$ where $q \not= c.$ This provides the
vanishing of the limit terms $H^i_{\mathfrak m}(C^{\cdot}_R(I)) = 0$ for all $i > n - l,$ as required.

(ii) $\Longrightarrow$ (i): Because of $l \geq 1$ the first statement in (ii) provides that
$H^{h(\mathfrak p)}_{\mathfrak pR_{\mathfrak p}}(H^c_{IR_{\mathfrak p}}(R_{\mathfrak p}))$ does not vanish. By virtue of
Lemma \ref{2.2} (b) it follows that $\dim R_{\mathfrak p}/IR_{\mathfrak p} \geq h(\mathfrak p)$ for all $\mathfrak p
\in V(I).$ Whence $c = \height IR_{\mathfrak p}$ for all $\mathfrak p \in V(I).$ As a consequence (cf. Lemma \ref{2.2} (c)) we see that $C^{\cdot}_R(I) \otimes_R R_{\mathfrak p} \simeq C^{\cdot}_{R_{\mathfrak p}}(IR_{\mathfrak p})$ for
all $\mathfrak p \in V(I).$

Now we proceed by induction on $d = \dim R/I.$
In the case of $d = 0$ the ideal $I$ is $\mathfrak m$-primary.
Therefore the statement is true because $R$ is a Gorenstein ring. So let $d > 0.$ First we show that the inductive hypothesis
implies
\[
\dim H^i_I(R) \leq \max \{ n-l -i, 0\} \text{ for all } c < i \leq n.
\]
To this end assume that $\dim H^i_I(R) > 0$ for a certain $i \geq n -l.$ Choose a prime ideal $\mathfrak p \in \Supp H^i_I(R) \setminus \{\mathfrak m\}.$  Therefore $H^i_{\mathfrak p R_{\mathfrak p}}(R_{\mathfrak p}) \not= 0$ and $i + l \leq \dim R_{\mathfrak p}$ by the induction hypothesis. On the other hand $l+i \leq \dim R_{\mathfrak p} < n \leq l+i$ a contradiction. Second, suppose that $\dim H^i_I(R) > n - l -i$ for a certain $c < i < n - l.$ Choose a prime ideal $\mathfrak p \in \Supp H^i_I(R)$ such that $\dim R/\mathfrak p = \dim H^i_I(R).$ Therefore $\dim R/\mathfrak p > n -l -i$ and  $l+i > \dim R_{\mathfrak p}.$ Moreover $H^i_{\mathfrak p R_{\mathfrak p}}(R_{\mathfrak p}) \not= 0$ and $i + l \leq \dim R_{\mathfrak p},$ that is again a contradiction.

With this information in mind the spectral sequence (cf. Proposition \ref{2.4}) degenerates
to isomorphisms $H^i_{\mathfrak m}(C^{\cdot}_R(I)) \simeq H^i(C^{\cdot}_R(I))$ for all $i > n-l.$ Finally the assumption in (ii)
for $\mathfrak p = \mathfrak m$ implies that $H^i_{\mathfrak m}(C^{\cdot}_R(I)) = 0$ for all $i > n-l$ (cf. Lemma \ref{2.2}). This finishes the proof
because of $H^i(C^{\cdot}_R(I)) \simeq H^i_I(R)$ for $i > c.$
\end{proof}

For $l \geq \dim R/I$ the previous result yields -- as a particular case -- the equivalence of the conditions (i) and (ii) of \cite[Theorem 3.1]{HS}. Another Corollary is the following:

\begin{corollary} \label{3.2} Suppose that $c \geq 2.$ With the above notation suppose that
\[
\widehat{R_{\mathfrak p}} \to
\Hom_{\widehat{R_{\mathfrak p}}}(H^c_{I\widehat{R_{\mathfrak p}}}(\widehat{R_{\mathfrak p}}),
H^c_{I\widehat{R_{\mathfrak p}}}(\widehat{R_{\mathfrak p}}))
 \]
is an isomorphism for all $\mathfrak p \in V(I) \setminus \{\mathfrak m\}$ (e.g. this is satisfied
in the case $I$ is locally a cohomological complete intersection). Then the following conditions are equivalent:
\begin{itemize}
\item[(i)]  $H^i_I(R) = 0$ for  $i = n-1, n.$
\item[(ii)] The natural homomorphism $\hat R \to \Hom_{\hat R}(H^c_{I \hat R}(\hat R),
H^c_{I \hat R}(\hat R))$ is an isomorphism.
\end{itemize}
\end{corollary}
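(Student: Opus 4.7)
The plan is to derive Corollary \ref{3.2} from Theorem \ref{3.1} applied with $l = 2$, using Lemma \ref{2.2}(a) and (d) as a dictionary that translates the endomorphism ring condition into the bijectivity condition appearing in part (ii) of that theorem.

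First, at each $\mathfrak p \in V(I)$, I set up the dictionary as follows. Lemma \ref{2.2}(d) identifies the endomorphism ring of $H^c$ with $\Ext^c(H^c, R)$, and the exact sequence in Lemma \ref{2.2}(a), combined with Matlis duality over the complete Gorenstein ring $\widehat{R_{\mathfrak p}}$, identifies the natural map $\widehat{R_{\mathfrak p}} \to \Hom(H^c, H^c)$ with the Matlis dual of $H^{h(\mathfrak p)}_{\mathfrak p R_{\mathfrak p}}(H^c_{IR_{\mathfrak p}}(R_{\mathfrak p})) \to E(k(\mathfrak p))$. Thus the hypothesis of Corollary \ref{3.2} is equivalent to bijectivity of this last map for every $\mathfrak p \in V(I) \setminus \{\mathfrak m\}$; that is, condition (ii) of Theorem \ref{3.1} with $l = 2$ already holds away from $\mathfrak m$. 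The same dictionary at $\mathfrak p = \mathfrak m$ turns condition (ii) of Corollary \ref{3.2} into exactly the missing instance of condition (ii) of Theorem \ref{3.1} at the maximal ideal.

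For the direction (ii) $\Rightarrow$ (i) these two identifications together deliver condition (ii) of Theorem \ref{3.1} with $l = 2$ at every $\mathfrak p \in V(I)$. The theorem then yields $\dim H^i_I(R) \le n - 2 - i$ for all $i > c$, and specialising to $i = n - 1, n$ forces $H^{n-1}_I(R) = H^n_I(R) = 0$.

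For (i) $\Rightarrow$ (ii) the real work is to upgrade the two vanishings in (i) to the full family of dimension bounds $\dim H^i_I(R) \le n - 2 - i$ for all $c < i \le n - 2$; once these are in hand, the reverse direction of Theorem \ref{3.1} combined with the dictionary at $\mathfrak m$ gives (ii). For this I fix such an $i$, pick $\mathfrak p$ minimal in $\Supp H^i_I(R)$ with $\dim R/\mathfrak p = \dim H^i_I(R)$, and dismiss the trivial case $\mathfrak p = \mathfrak m$. Every prime of $V(I)$ contained in $\mathfrak p$ is then also different from $\mathfrak m$, so the hypothesis passes to $R_{\mathfrak p}$ at every point of $V(IR_{\mathfrak p})$, and by the dictionary (now applied at $R_{\mathfrak p}$) condition (ii) of Theorem \ref{3.1} holds for $R_{\mathfrak p}$ with $l = 2$. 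The theorem supplies $\dim H^i_{IR_{\mathfrak p}}(R_{\mathfrak p}) \le \dim R_{\mathfrak p} - 2 - i$, and the minimality of $\mathfrak p$ forces the left-hand side to be $0$, whence $\dim R/\mathfrak p = n - \dim R_{\mathfrak p} \le n - 2 - i$ as required. The main obstacle I expect is the careful verification of the dictionary over each $\widehat{R_{\mathfrak p}}$ and the checking that, under the hypothesis, the height of $IR_{\mathfrak p}$ is always $c$, so that Lemma \ref{2.2}(c) applies and the truncation complex localises correctly.
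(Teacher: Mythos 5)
Your proposal is correct and follows essentially the same route as the paper: Lemma \ref{2.2}(d) together with the Local Duality Theorem serves as the dictionary turning the endomorphism-ring hypotheses into condition (ii) of Theorem \ref{3.1} with $l = 2$, and your minimal-prime localization step is exactly the ``localization argument'' the paper invokes to obtain $\dim H^i_I(R) \le \max\{n-2-i,\,0\}$ for all $i > c$ from the hypothesis away from $\mathfrak m$ before applying Theorem \ref{3.1} at the maximal ideal. There is no substantive difference in approach.
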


\begin{proof} By the Local Duality Theorem the assumption is equivalent to the isomorphisms 
\[
H^{h(\mathfrak p)}_{\mathfrak pR_{\mathfrak p}}(H^c_{IR_{\mathfrak p}}(R_{\mathfrak p})) \to E(k(\mathfrak p))
\]
for all $\mathfrak p \in V(I)\setminus \{\mathfrak m\}.$ By a localization argument and Theorem \ref{3.1} this is equivalent to $\dim H^i_I(R) \leq \max \{n-2-i, 0\}$ for all $i > c.$ Therefore, by Theorem \ref{3.1} the statement in (ii) holds 
if and only if $H^i_I(R) = 0$ for $i = n-1, n.$
\end{proof}

Note that Corollary \ref{3.2} proves Theorem \ref{1.1} of the Introduction. Another Corollary of Theorem 
\ref{3.1} is the following vanishing result.

\begin{corollary} \label{3.3} Fix the notation as above. Suppose that $I$ is locally
a cohomological complete intersection. For an integer $l \geq 1$ the following conditions are
equivalent:
\begin{itemize}
\item[(i)] $\cd I \leq \max \{n - l, c\},$ i.e. $H^i_I(R) = 0$ for all $i > \max\{n - l, c\}.$
\item[(ii)] The natural homomorphism $H^d_{\mathfrak m}(H^c_I(R)) \to E$ is surjective resp. bijective if $l \geq 2$
and $H^i_{\mathfrak m}(H^c_I(R)) = 0$ for $d - l + 1 < i < d.$
\end{itemize}
\end{corollary}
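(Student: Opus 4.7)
The plan is to reduce the corollary to a direct application of Theorem \ref{3.1} by showing two things: first, that under the hypothesis ``$I$ locally a cohomological complete intersection,'' all the local conditions in \ref{3.1}(ii) at primes $\mathfrak p\ne\mathfrak m$ are automatic, so that the only surviving condition is the one at $\mathfrak m$, which is exactly condition (ii) of the Corollary; second, that condition (i) of the Corollary is equivalent to the dimension estimate in \ref{3.1}(i) under the same hypothesis.

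For the first point I would argue as follows. For any $\mathfrak p\in V(I)\setminus\{\mathfrak m\}$, the hypothesis yields $H^i_{IR_{\mathfrak p}}(R_{\mathfrak p})=0$ for $i\ne c$. Combining this with Definition \ref{2.1} applied to $R_{\mathfrak p}$ shows that $C^{\cdot}_{R_{\mathfrak p}}(IR_{\mathfrak p})$ is exact, hence quasi-isomorphic to zero. Feeding this into the exact sequence of Lemma \ref{2.2}(a) for $R_{\mathfrak p}$ gives $H^{h(\mathfrak p)}_{\mathfrak pR_{\mathfrak p}}(H^c_{IR_{\mathfrak p}}(R_{\mathfrak p}))\simeq E(k(\mathfrak p))$ and $H^{i}_{\mathfrak pR_{\mathfrak p}}(H^c_{IR_{\mathfrak p}}(R_{\mathfrak p}))=0$ for $i<h(\mathfrak p)$. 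Thus both requirements of \ref{3.1}(ii) are satisfied at every such $\mathfrak p$, independently of $l$, so in the statement of Theorem \ref{3.1} only the condition at $\mathfrak p=\mathfrak m$ remains, and that is precisely condition (ii) of the Corollary (with $h(\mathfrak m)=d$).

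For the second point I would use that the same local vanishing $H^i_{IR_{\mathfrak p}}(R_{\mathfrak p})=0$ for $i>c$ and $\mathfrak p\ne\mathfrak m$ forces $\Supp H^i_I(R)\subseteq\{\mathfrak m\}$, and hence $\dim H^i_I(R)\le 0$, for every $i>c$. Consequently, for $c<i\le n-l$ the inequality $\dim H^i_I(R)\le n-l-i$ is free, while for $i>n-l$ it is equivalent to $H^i_I(R)=0$. Therefore condition (i) of the Corollary and condition (i) of Theorem \ref{3.1} are equivalent under the standing hypothesis. Applying Theorem \ref{3.1} then closes the loop.

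The only genuinely delicate step is the first one, i.e.\ translating ``locally a cohomological complete intersection'' into the vanishing $C^{\cdot}_{R_{\mathfrak p}}(IR_{\mathfrak p})\simeq 0$ and deducing from the four-term sequence of Lemma \ref{2.2}(a) both the surjectivity onto $E(k(\mathfrak p))$ and the complete vanishing in degrees $<h(\mathfrak p)$; everything after that is book-keeping within the framework already built up in Section \ref{2.1} and in Theorem \ref{3.1}.
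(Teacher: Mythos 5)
Your proposal is correct and follows essentially the same route as the paper: both reduce the corollary to Theorem \ref{3.1} via the observation that ``locally a cohomological complete intersection'' is equivalent to $\dim H^i_I(R)\leq 0$ for all $i>c$, which makes condition (i) of the corollary equivalent to condition (i) of the theorem. Your explicit verification (via exactness of $C^{\cdot}_{R_{\mathfrak p}}(IR_{\mathfrak p})$ and Lemma \ref{2.2}(a)) that the conditions of \ref{3.1}(ii) hold automatically at all $\mathfrak p\neq\mathfrak m$ is exactly the step the paper leaves implicit, so it is a welcome elaboration rather than a different argument.
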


\begin{proof} Note that the ideal $I$ is locally a cohomological complete intersection if and 
only if $\dim H^i_I(R) \leq 0$ for all $i > c.$ This follows by localization and because of $H^i_{I R_{\mathfrak p}}(R_{\mathfrak p}) = 0$ for all $i \not= c$ and all $\mathfrak p \in V(I)\setminus \{\mathfrak m\}.$ Therefore, as a consequence of Theorem \ref{3.1}, the statement is true.
\end{proof}

\section{Problems and Examples}

The first example shows that the assumptions in Corollary \ref{3.2} are not necessary for the equivalence of both of the statements. Moreover, it shows that the isomorphism $R \simeq \Hom_R(H^c_I(R), H^c_I(R))$ does not localize.

\begin{example} \label{4.2} (cf. \cite[Example 4.1]{HS}) Let $k$ be an arbitrary field. Let
$R = k[|x_0,\ldots,x_4|]$ denote the
formal power series ring in five variables over $k.$ Let
\[
I =
(x_0,x_1)\cap (x_1,x_2)\cap (x_2,x_3)\cap (x_3,x_4).
\]
Then $c = \height I = 2$ and  $H^i_I(R) = 0$ for all $i \not= 2,3,$ as it follows by the
use of the Mayer-Vietoris sequence for local cohomology. Moreover (cf. \cite[Example 4.1]{HS}) it is shown that $H^3_I(R) = E_R(R/\mathfrak p), \mathfrak p = (x_0,x_1,x_3,x_4).$ The spectral sequence
\[
E_2^{p,q} = H^p_{\mathfrak m}(H^q_I(R)) \Longrightarrow E_{\infty}^{p+q} = H_{\mathfrak m}^{p+q}(R)
\]
provides an isomorphism $H^3_{\mathfrak m}(H^2_I(R)) \simeq E.$ Recall that $H^p_{\mathfrak m}(H^3_I(R)) = 0$ for all $p \in \mathbb N.$ By Local Duality it follows that the natural
homomorphism $R \to \Hom_R(H^2_I(R), H^2_I(R))$ is an isomorphism. On the other hand, it is easily seen that this is not true for $\widehat{R_{\mathfrak p}}$ because
\[
H^2_{IR_{\mathfrak p}}(R_{\mathfrak p}) \simeq H^2_{I_1R_{\mathfrak p}}(R_{\mathfrak p}) \oplus H^2_{I_2R_{\mathfrak p}}(R_{\mathfrak p}), I_1 = (x_0,x_1), I_2 = (x_3,x_4),
\]
decomposes into two non-zero direct summands. This is seen by the use the Mayer-Vietoris sequence for local cohomology.
\end{example}

The following example shows that the endomorphism ring $\Hom_R(H^c_I(R), H^c_I(R)), c = \height I,$ is in general not a finitely generated $R$-module.

\begin{example} \label{4.3} (cf. \cite[{\S} 3]{rH2}) Let $k$ denote a field and $R = k[|x,y,u,v|]/(xu-yv),$ where
$k[|x,y,u,v|]$ denotes the power series ring in four variables over
$k.$ Let $I = (u,v)R.$ Then $\dim R = 3, \dim R/I = 2$ and $c = 1.$
It follows that $H^i_I(R) = 0$ for $i \not= 1,2.$ Moreover $\Supp H^2_I(R) \subset \{\mathfrak m\}.$ The truncation
complex with the short exact sequence (cf. \ref{2.1})
\[
0 \to H^c_I(R)[-c] \to \gam (E^{\cdot}) \to C^{\cdot}_R(I) \to 0
\]
induces a short exact sequence on local cohomology
\[
0 \to H^2_I(R) \to H^2_{\mathfrak m}(H^1_I(R)) \to E \to 0
\]
(cf. Lemma \ref{2.2}).
Hartshorne (cf. \cite[{\S} 3]{rH2}) has shown that the socle of
$H^2_I(R)$ is not a finite dimensional $k$-vector space. Therefore,
the socle of $H^2_{\mathfrak m}(H^1_I(R))$ is infinite. Moreover there are the following isomorphisms
\[
\Hom_R(H^1_I(R), H^1_I(R)) \simeq \Ext^1_R(H^1_I(R), R) \simeq \Hom_R(H^2_{\mathfrak m}(H^1_I(R)), E)
\]
(cf. Lemma \ref{2.2} (d) and \cite[Lemma 1.2]{HS}). By the Nakayama Lemma this means that $\Hom_R(H^1_I(R), H^1_I(R))$ is not a finitely generated $R$-module.
\end{example}

So, one might ask for a characterization of the finiteness of the endomorphism ring of $H^c_I(R), c = \height I.$

%Acknowledgment: The author is grateful to the reviewer for several suggestions for an improvement of 
%the paper.

\end{document}